  \DeclareSymbolFont{AMSb}{U}{msb}{m}{n}
  \DeclareSymbolFontAlphabet{\mathbb}{AMSb}}  
\definecolor{Gray}{gray}{0.93} 
\definecolor{LightCyan}{rgb}{0.88,1,1}
\theoremstyle{plain}
\newtheorem{theorem}{Theorem}[section]
\newtheorem{corollary}[theorem]{Corollary}
\theoremstyle{definition}
\newtheorem{definition}[theorem]{Definition}
\newtheorem{example}[theorem]{Example}
\theoremstyle{remark}
\numberwithin{equation}{section}
\def\th@plain{%
  \thm@notefont{}
  \itshape 
}
\def\th@definition{%
  \thm@notefont{}
  \normalfont 
} \makeatother
\setlist{font=\normalfont}
\DeclareMathAlphabet{\cols}{OMS}{cmsy}{m}{n} %
\newcommand{\set}[1]{\{#1\}}
\newcommand{\cset}[2]{\set{{#1}\colon{#2}}}
\newcommand{\gyr}[2]{{\mathrm{gyr}[{#1}]}{#2}}
\newcommand{\qt}[1]{``#1''}
\newcommand{\R}{\mathbb{R}}
\newcommand{\norm}[1]{\|#1\|}
\newcommand{\gen}[1]{\langle#1\rangle}
\newcommand{\igyr}[2]{{\mathrm{gyr^{-1}}[{#1}]}{#2}}
\newcommand{\B}{\mathbb{B}}
\newcommand{\Bp}[1]{\left(#1\right)}
\newcommand{\vphi}{\varphi}
\newcommand{\D}{\mathbb{D}}
\newcommand{\abs}[1]{|#1|}
\newcommand{\Abs}[1]{\left|#1\right|}
\newcommand{\lbar}[1]{\overline{#1}}
\newcommand{\C}{\mathbb{C}}
\DeclareMathOperator{\aut}{Aut}
\renewcommand{\vec}[1]{\mathbf{#1}}
\begin{document}
\title{\bf{On metric structures of normed gyrogroups}}
\author{Teerapong Suksumran\,\href{https://orcid.org/0000-0002-1239-5586}{\includegraphics[scale=1]{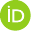}}\\
Center of Excellence in Mathematics and Applied Mathematics\\
Department of Mathematics\\
Faculty of Science, Chiang Mai University\\
Chiang Mai 50200, Thailand\\
{\tt teerapong.suksumran@cmu.ac.th}}
\date{}
\maketitle


\begin{abstract}
In this article, we indicate that the open unit ball in $n$-dimensional Euclidean space $\R^n$ admits norm-like functions compatible with the Poincar\'{e} and Beltrami--Klein metrics. This leads to the notion of a normed gyrogroup, similar to that of a normed group in the literature. We then examine topo-logical and geometric structures of normed gyrogroups. In particular, we prove that the normed gyrogroups are homogeneous and form left \mbox{invariant} metric spaces  and derive a version of the Mazur--Ulam theorem. We also give certain sufficient conditions, involving the right-gyrotranslation inequality and Klee's condition, for a normed gyrogroup to be a topological gyrogroup.
\end{abstract}
\textbf{Keywords.} Topological gyrogroup, normed gyrogroup, gyronorm, left invariant metric, Mazur--Ulam theorem.\\[3pt]
\textbf{2010 MSC.} Primary 51H05; Secondary 20N05, 22A99, 30F45, 30L99.
\thispagestyle{empty}

\section{Introduction}
Roughly speaking, a normed group is a group that comes with a compatible norm (also called a length function), similar to the case of normed linear spaces. A prominent example of a normed group is a finitely generated group with the word metric, which is one of the main ingredients in geometric group theory. The normed groups abound as an integral part in the theory of topological groups \cite{MR2743093}. They are blended objects that have significance in group theory, geometry, analysis, and topology, to name a few. 

In \cite{AU1988TRP}, Ungar studies a parametrization of the Lorentz transformation group. This leads to the formation of gyrogroup theory, a rich subject in mathematics \cite{TSKW2017MFE, TS2015GAG, TS2017IGU, AU2015PRL, MF2015HAM, MR3595192, TS2018CFE}. Loosely speaking, a gyrogroup is a group-like structure in which the associative law fails to satisfy. However, it obeys the {\it gyroassociative} law, a weak form of associativity, as well as the {\it loop property}, an algebraic rule \mbox{equivalent} to the {\it Bol identity} in loop theory.  One of the virtues of studying gyrogroups is an application in non-Euclidean geometry \cite{AU2008AHG}, where Ungar examines \mbox{analytic} \mbox{hyperbolic} geometry using the gyrolanguage.

Atiponrat \cite{MR3646419} and Cai et al. \cite{ZCSLWH2018NPL} pave the way for studying topological gyro-groups. In particular,  Atiponrat proves that in the class of topological gyrogroups, being a $T_0$-space is equivalent to being a $T_3$-space \cite[Theorem 3]{MR3646419}. Further, she attempts to extend the famous Birkhoff--Kakutani theorem by proving that every first-countable Hausdorff topological gyrogroup is premetrizable \cite[Theorem 4]{MR3646419}. The latter result is strengthened when Cai et al. prove that every first-countable Hausdorff topological gyrogroup is metrizable \cite[Theorem 2.3]{ZCSLWH2018NPL}. The achieved results inspire us to investigate topological properties of gyrogroups, which eventually bring us to the notion of a normed gyrogroup. This provides a large class of gyrogroups with a left invariant metric, and some of them are indeed topological gyrogroups. 

\section{Preliminaries}
Standard terminology and notation in algebra, topology, and geometry used throughout the article are defined as usual. In this section, we collect relevant \mbox{definitions} and elementary properties of gyrogroups for reference \cite{AU2008AHG, TS2016TAG}. The reader familiar with gyrogroup theory may skip this section. 

Let $G$ be a nonempty set equipped with a binary operation $\oplus$ on $G$. Denote by $\aut{G}$ the group of automorphisms of $(G, \oplus)$.

\begin{definition}[Gyrogroups]\label{def: gyrogroup}
A nonempty set $G$, together with a binary operation $\oplus$ on $G$, is called a {\it gyrogroup} if it satisfies the following axioms.
\begin{enumerate}
    \item[(G1)] There exists an element $e\in G$ such that $e\oplus a =
    a$ for all $a\in G$.
    \item[(G2)] For each $a\in G$, there exists an element $b\in G$ such that
$b\oplus a = e$.
    \item[(G3)] For all $a$, $b\in G$, there is an automorphism
$\gyr{a,b}{}\in\aut{G}$ such that
    \begin{equation}\tag{left gyroassociative law} a\oplus (b\oplus c) = (a\oplus b)\oplus\gyr{a,
    b}{c}\end{equation}
    for all $c\in G$.
    \item[(G4)] For all $a$, $b\in G$, $\gyr{a\oplus b, b}{} = \gyr{a,
    b}{}$.\hfill(left loop property)
\end{enumerate}
\end{definition}

\newpage

Note that the axioms in Definition \ref{def: gyrogroup} imply the right counterparts. In particular, any gyrogroup has a unique two-sided identity $e$, and an element $a$ of the gyrogroup has a unique two-sided inverse $\ominus a$. The automorphism $\gyr{a, b}{}$ is called the {\it gyroautomorphism} generated by $a$ and $b$. It is clear that every group satisfies the gyrogroup axioms (the gyroautomorphisms are the identity map) and hence is a gyrogroup. Conversely, any gyrogroup with {\it trivial} gyroautomorphisms forms a group. From this point of view, gyrogroups naturally generalize groups.

The table below summarizes some algebraic properties of gyrogroups \cite{AU2008AHG, TS2016TAG}, which will prove useful in studying topological and geometric aspects of gyro-groups in Sections \ref{sec: normed gyrogroup} and \ref{sec: topological structure}. We remark that gyroautomorphisms play an essential role in gyrogroup theory; for example, they appear as part of generic algebraic rules extended from group-theoretic identities.

\begin{table}[ht]
\centering
\begin{tabular}{ll}\hline
\rowcolor{LightCyan}{\hskip1.2cm}{\sc gyrogroup identity} & {\hskip0.8cm}{\sc name/reference}\\ \hline
$\ominus (\ominus a) = a$ & Involution of inversion\\ 
\rowcolor{Gray}$\ominus a\oplus(a\oplus x) = x$ & Left cancellation law\\
$\gyr{a, b}{c} = \ominus(a\oplus b)\oplus(a\oplus(b\oplus c))$ & Gyrator identity\\
\rowcolor{Gray}$\ominus (a\oplus b) = \gyr{a, b}{(\ominus b\ominus a)}$ & cf. $(ab)^{-1} = b^{-1}a^{-1}$\\
$(\ominus a\oplus b)\oplus\gyr{\ominus a, b}{(\ominus b\oplus c)} = \ominus a\oplus c$ & cf. $(a^{-1}b)(b^{-1}c) = a^{-1}c$\\
\rowcolor{Gray}$\gyr{\ominus a, \ominus b}{} = \gyr{a, b}{}$ & Even property\\
$\gyr{b, a} \,=\, \igyr{a, b}{}$, the inverse of $\gyr{a, b}{}$ & Inversive symmetry\\
\rowcolor{Gray}$\vphi(\gyr{a, b}{c}) = \gyr{\vphi(a), \vphi(b)}{\vphi(c)}$ & Gyration preserving under\\
\rowcolor{Gray}{} & a gyrogroup homomorphism $\vphi$\\
$L_a\circ L_b = L_{a\oplus b}\circ\gyr{a, b}{}$ & Composition law for\\
{} & left gyrotranslations\\
\hline
\end{tabular}
\caption{Algebraic properties of gyrogroups (cf. \cite{AU2008AHG, TS2016TAG}).}\label{tab: properties of gyrogroups}
\end{table}

\section{Normed gyrogroups and concrete examples}\label{sec: normed gyrogroup}
In this section, we establish that any gyrogroup with an appropriate {\it length function}, called a {\it gyronorm}, has the metric structure and hence is a Hausdorff space. We also exhibit a few examples of well-known gyrogroups that have gyro-norms.

\subsection{The definition and basic properties}
\begin{definition}[Gyronorms]\label{def: length function on gyrogroup} Let $G$ be a gyrogroup. A function $\norm{\cdot}\colon G\to \R$ is called a {\it gyronorm} on $G$ if the following properties hold:
\begin{enumerate}
\item\label{item: positivity} $\norm{x}\geq 0$  for all $x\in G$ and $\norm{x} = 0$ if and only if $x = e$;\hfill(positivity)
\item\label{item: invariant under taking inverses} $\norm{\ominus x} = \norm{x}$ for all $x\in G$;\hfill(invariant under taking inverses)
\item\label{item: subadditivity} $\norm{x\oplus y}\leq \norm{x}+\norm{y}$ for all $x, y\in G$;\hfill(subadditivity)
\item\label{item: invariant under gyrations}$\norm{\gyr{a, b}{x}} = \norm{x}$ for all $a, b, x\in G$.\hfill(invariant under gyrations)
\end{enumerate}
\end{definition}
Any gyrogroup with a gyronorm is called a {\it normed} gyrogroup. We remark that the term \qt{gyronorm} is quite different from what Ungar used in Chapter 4 of \cite{AU2008AHG}.  In Section \ref{sec: examples}, we give several concrete examples of gyrogroups with a gyronorm. Clearly, Definition \ref{def: length function on gyrogroup} is a generalization of the notion of a {\it group}-norm \cite[p. 8]{MR2743093}, which in turn is motivated by norms on linear spaces. Furthermore, any gyrogroup may be viewed as a normed gyrogroup with a gyronorm defined by
$$
\norm{x} = 
\begin{cases}
0 & \textrm{if }x = e;\\
1 & \textrm{if }x \ne e.\\
\end{cases}
$$

\begin{theorem}\label{thm: metric induced by length function}
Let $G$ be a normed gyrogroup. Define
\begin{equation}
d(x, y) = \norm{\ominus x\oplus y}
\end{equation}
for all $x, y\in G$. Then $d$ is a metric on $G$ and so $(G, d)$ forms a metric space.
\end{theorem}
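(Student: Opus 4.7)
The plan is to verify the four axioms of a metric, using the four properties of a gyronorm together with the algebraic identities listed in Table~\ref{tab: properties of gyrogroups}. The main work is routing the proof of symmetry and the triangle inequality through the correct gyrogroup identities, because without the gyration-invariance clause in Definition~\ref{def: length function on gyrogroup}, neither would follow in the naive way.

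Non-negativity is immediate from the positivity of $\norm{\cdot}$. For the identity of indiscernibles, I would use positivity together with the left cancellation law: $d(x,y)=0$ means $\ominus x\oplus y=e$, and applying $x\oplus(\cdot)$ to both sides (or using left cancellation $\ominus x\oplus(x\oplus e)=e$) gives $y=x$. The converse is clear since $\ominus x\oplus x=e$ and $\norm{e}=0$.

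For symmetry, I would apply the identity $\ominus(a\oplus b)=\gyr{a,b}(\ominus b\ominus a)$ from Table~\ref{tab: properties of gyrogroups} with $a=\ominus x$ and $b=y$, obtaining
\[
\ominus(\ominus x\oplus y)=\gyr{\ominus x,y}(\ominus y\oplus x).
\]
Then the invariance of $\norm{\cdot}$ under inversion and under gyrations yields
\[
d(x,y)=\norm{\ominus x\oplus y}=\norm{\ominus(\ominus x\oplus y)}=\norm{\gyr{\ominus x,y}(\ominus y\oplus x)}=\norm{\ominus y\oplus x}=d(y,x).
\]
For the triangle inequality, the key is the identity $(\ominus a\oplus b)\oplus\gyr{\ominus a,b}(\ominus b\oplus c)=\ominus a\oplus c$ from the table. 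Specializing to $a=x$, $b=y$, $c=z$ gives
\[
\ominus x\oplus z=(\ominus x\oplus y)\oplus\gyr{\ominus x,y}(\ominus y\oplus z),
\]
so by subadditivity and invariance under gyrations,
\[
d(x,z)\le\norm{\ominus x\oplus y}+\norm{\gyr{\ominus x,y}(\ominus y\oplus z)}=\norm{\ominus x\oplus y}+\norm{\ominus y\oplus z}=d(x,y)+d(y,z).
\]

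The only mild obstacle is finding the right specialization of the table identities for symmetry and triangle inequality; once those are located, each axiom collapses in one or two lines. In particular, property~\eqref{item: invariant under gyrations} of the gyronorm is precisely what neutralizes the leftover gyroautomorphisms that gyrogroup identities produce, so the proof is essentially a routing exercise rather than a delicate argument.
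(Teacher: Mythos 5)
Your proposal is correct and follows essentially the same route as the paper's proof: left cancellation for the identity of indiscernibles, the identity $\ominus(a\oplus b)=\gyr{a,b}{(\ominus b\ominus a)}$ combined with inversion- and gyration-invariance for symmetry, and the identity $(\ominus a\oplus b)\oplus\gyr{\ominus a,b}{(\ominus b\oplus c)}=\ominus a\oplus c$ with subadditivity and gyration-invariance for the triangle inequality. The only cosmetic difference is that you derive $d(x,y)=d(y,x)$ while the paper derives $d(y,x)=d(x,y)$; the specializations of the table identities are otherwise identical.
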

\begin{proof}
By definition, $d(x, y) = \norm{\ominus x\oplus y}\geq 0$ for all $x, y\in G$. Clearly, $d(x, x)  = \norm{e} = 0$ for all $x\in G$. Suppose that  $d(x, y) = 0$. Then $\norm{\ominus x\oplus y} = 0$. By definition, $\ominus x\oplus y = e$. Hence, $x = y$ by the left cancellation law.

Let $x, y, z\in G$. Using appropriate properties of gyrogroups in Table \ref{tab: properties of gyrogroups}, together with the defining properties of a gyronorm, we obtain 
$$
d(y, x) = \norm{\ominus y\oplus x} =
 \norm{\ominus(\ominus y\oplus x)}
= \norm{\gyr{\ominus y, x}{(\ominus x\oplus y)}}
= \norm{\ominus x\oplus y}
=d(x, y).
$$
Furthermore, we obtain
\begin{align*}
d(x, z) &= \norm{\ominus x\oplus z}\\
{} &= \norm{(\ominus x\oplus y)\oplus \gyr{\ominus x, y}{(\ominus y\oplus z)}}\\
{} &\leq \norm{\ominus x\oplus y}+\norm{\gyr{\ominus x, y}{(\ominus y\oplus z)}}\\
{} &= \norm{\ominus x\oplus y}+ \norm{\ominus y\oplus z}\\
{} &= d(x, y) + d(y, z).
\end{align*}
This proves that $d$ satisfies the defining properties of a metric.
\end{proof}

The metric $d$ induced by a gyronorm on $G$ in Theorem \ref{thm: metric induced by length function} is called a {\it gyronorm} metric. Whenever we say that $G$ is a normed gyrogroup, we assume that $G$ is endowed with the corresponding gyronorm metric and that $G$ carries the topology induced by this metric, unless mentioned otherwise. It is clear that every isometry of a normed gyrogroup to itself is a homeomorphism for the inverse of an isometry is again an isometry. Next, we show that known gyrogroups in the literature possess gyronorms.

\subsection{Concrete examples}\label{sec: examples}
\subsubsection{An $n$-dimensional Euclidean version of the Einstein gyrogroup}
Let $\B$ denote the open unit ball in $n$-dimensional Euclidean space $\R^n$, that is,
\begin{equation}
\B = \cset{\vec{v}\in\R^n}{\norm{\vec{v}}<1},
\end{equation}
where $\norm{\cdot}$ denotes the {\it Euclidean} norm on $\R^n$. The {\it Einstein} gyrogroup consists of $\B$, together with Einstein addition $\oplus_E$ given by
\begin{equation}\label{eqn: Einstein addition}
\vec{u}\oplus_E\vec{v} =
\dfrac{1}{1+\gen{\vec{u},\vec{v}}}\Bp{\vec{u}+
\dfrac{1}{\gamma_{\vec{u}}}\vec{v} +
\dfrac{\gamma_{\vec{u}}}{1+\gamma_{\vec{u}}}\gen{\vec{u},\vec{v}}\vec{u}}
\end{equation}
for all $\vec{u}, \vec{v}\in\B$, where $\gamma_{\vec{u}}$ is the {\it Lorentz factor} given by 
$\gamma_{\vec{u}} = \dfrac{1}{\sqrt{1-\norm{\vec{u}}^2}}.
$
The zero vector $\vec{0}$ acts an the identity of $\B$ under $\oplus_E$. For each $\vec{v}\in\B$, the negative vector $-\vec{v}$ acts as the inverse of $\vec{v}$ with respect to Einstein addition. By Proposition 2.4 of \cite{SKJL2013UBL}, the gyroautomorphisms of $(\B, \oplus_E)$ are orthogonal in the sense that
$$
\norm{\gyr{\vec{u}, \vec{v}}{\vec{w}}} = \norm{\vec{w}}
$$
for all $\vec{u}, \vec{v}, \vec{w}\in\B$.

Define a function $\norm{\cdot}_E$ on $\B$ by the equation
\begin{equation}\label{eqn: length function on Einstein gyrogroup}
\norm{\vec{v}}_E = \tanh^{-1}{\norm{\vec{v}}},\qquad \vec{v}\in\B,
\end{equation}
where $\tanh^{-1}$ denotes the inverse of the hyperbolic tangent function on $\R$.

\begin{theorem}\label{thm: length function on Einstein gyrogroup}
The function $\norm{\cdot}_E$ defined by \eqref{eqn: length function on Einstein gyrogroup} is a gyronorm on the Einstein gyrogroup.
\end{theorem}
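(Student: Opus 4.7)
The plan is to verify the four defining properties of a gyronorm for $\norm{\cdot}_E = \tanh^{-1}\norm{\cdot}$ on $\B$. Properties (1), (2), and (4) will be almost immediate, while property (3) (subadditivity) will be the real work and will rest on a scalar estimate for Einstein addition combined with the classical addition formula for $\tanh^{-1}$.

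First I would dispose of (1), (2), and (4). Property (1) follows because $\tanh^{-1}$ is strictly increasing on $[0,1)$ with $\tanh^{-1}(0)=0$, and $\norm{\vec{v}}\in[0,1)$ with $\norm{\vec{v}}=0$ iff $\vec{v}=\vec{0}$. Property (2) uses the fact that the Euclidean norm satisfies $\norm{-\vec{v}}=\norm{\vec{v}}$, hence $\norm{-\vec{v}}_E=\norm{\vec{v}}_E$. Property (4) follows directly from Proposition 2.4 of \cite{SKJL2013UBL} cited in the paragraph preceding \eqref{eqn: length function on Einstein gyrogroup}: since the gyroautomorphisms of $(\B,\oplus_E)$ preserve the Euclidean norm, they also preserve $\tanh^{-1}\norm{\cdot}$.

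The bulk of the argument is (3). My strategy is to first prove the scalar inequality
\begin{equation*}
\norm{\vec{u}\oplus_E\vec{v}}\leq \dfrac{\norm{\vec{u}}+\norm{\vec{v}}}{1+\norm{\vec{u}}\norm{\vec{v}}}
\end{equation*}
for all $\vec{u},\vec{v}\in\B$. The natural way to access this is through the well-known relativistic identity $\gamma_{\vec{u}\oplus_E\vec{v}}=\gamma_{\vec{u}}\gamma_{\vec{v}}(1+\gen{\vec{u},\vec{v}})$. Applying the Cauchy--Schwarz inequality to $\gen{\vec{u},\vec{v}}$ yields
\begin{equation*}
\gamma_{\vec{u}\oplus_E\vec{v}}\leq \dfrac{1+\norm{\vec{u}}\norm{\vec{v}}}{\sqrt{(1-\norm{\vec{u}}^2)(1-\norm{\vec{v}}^2)}},
\end{equation*}
and since $\gamma_{\vec{w}}=1/\sqrt{1-\norm{\vec{w}}^2}$ is a strictly increasing function of $\norm{\vec{w}}$, this bound translates into the desired scalar inequality after the elementary identity $(1+st)^2-(s+t)^2=(1-s^2)(1-t^2)$, with $s=\norm{\vec{u}}$ and $t=\norm{\vec{v}}$.

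Finally, I would apply $\tanh^{-1}$ (monotone on $[0,1)$) to both sides and invoke the classical addition formula
\begin{equation*}
\tanh^{-1}(a)+\tanh^{-1}(b)=\tanh^{-1}\!\left(\dfrac{a+b}{1+ab}\right),\qquad a,b\in[0,1),
\end{equation*}
to conclude $\norm{\vec{u}\oplus_E\vec{v}}_E\leq\norm{\vec{u}}_E+\norm{\vec{v}}_E$. The main obstacle is the scalar inequality for $\norm{\vec{u}\oplus_E\vec{v}}$; once that is in place, the rest is a one-line manipulation using $\tanh^{-1}$. If the reader is uncomfortable invoking the $\gamma$-identity, an alternative route is to square the Einstein addition formula \eqref{eqn: Einstein addition} directly and estimate the resulting expression by Cauchy--Schwarz, but the $\gamma$-approach is substantially cleaner.
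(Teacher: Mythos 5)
Your proposal is correct, and its overall architecture matches the paper's: verify (1), (2), (4) directly from monotonicity of $\tanh^{-1}$, symmetry of the Euclidean norm, and the orthogonality of the gyroautomorphisms, then reduce subadditivity to the scalar inequality $\norm{\vec{u}\oplus_E\vec{v}}\leq(\norm{\vec{u}}+\norm{\vec{v}})/(1+\norm{\vec{u}}\norm{\vec{v}})$ combined with the addition formula for $\tanh^{-1}$. The one real difference is that the paper simply cites this scalar inequality (it is Proposition 3.3 of \cite{SKJL2013UBL}, invoked together with Lemma 3.2(iv) there for the $\tanh^{-1}$ step), whereas you actually prove it, via the gamma identity $\gamma_{\vec{u}\oplus_E\vec{v}}=\gamma_{\vec{u}}\gamma_{\vec{v}}(1+\gen{\vec{u},\vec{v}})$, Cauchy--Schwarz, and the factorization $(1+st)^2-(s+t)^2=(1-s^2)(1-t^2)$. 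That derivation is standard and correct --- the only ingredient you take on faith is the gamma identity itself, which is a well-known consequence of the Einstein addition formula --- so your version buys a self-contained argument at the cost of one extra computation, while the paper's buys brevity at the cost of two external references. No gap either way.
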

\begin{proof}
Since $\tanh{r}\geq 0$ if and only $r\geq 0$, it follows that $\norm{\vec{v}}_E\geq 0$ for all $\vec{v}\in\B$. Note that $\norm{\vec{0}}_E = \tanh^{-1}0 = 0$. Suppose that $\norm{\vec{v}}_E = 0$. Then $0 = \tanh^{-1}{\norm{\vec{v}}}$, which implies $\norm{\vec{v}} = 0$. Hence, $\vec{v} = \vec{0}$. Let $\vec{v}\in\B$. Then $$\norm{\ominus \vec{v}} _E= \norm{-\vec{v}}_E = \tanh^{-1}{\norm{-\vec{v}}} = \tanh^{-1}{\norm{\vec{v}}} = \norm{\vec{v}}_E.$$

Let $\vec{u}, \vec{v}\in\B$. Applying Proposition 3.3 of \cite{SKJL2013UBL} and Lemma 3.2 (iv) of \cite{SKJL2013UBL} gives $$\norm{\vec{u}\oplus_E \vec{v}}_E \leq \norm{\vec{u}}_E+\norm{\vec{v}}_E.$$ Let $\vec{u}, \vec{v}, \vec{w}\in\B$. Then 
\begin{equation*}
\norm{\gyr{\vec{u}, \vec{v}}{\vec{w}}}_E = \tanh^{-1}{\norm{\gyr{\vec{u}, \vec{v}}{\vec{w}}}} = \tanh^{-1}{\norm{\vec{w}}} = \norm{\vec{w}}_E.\qedhere
\end{equation*}
\end{proof}

It follows from Theorems \ref{thm: metric induced by length function} and \ref{thm: length function on Einstein gyrogroup} that
\begin{equation}
d_E(\vec{u}, \vec{v}) = \tanh^{-1}{\norm{-\vec{u}\oplus_E \vec{v}}}
\end{equation}
defines a metric on $\B$. This metric is called the {\it rapidity} metric on the Einstein gyrogroup \cite{SKJL2013UBL, AU2008AHG}. It is known that  the rapidity metric on the Einstein gyrogroup agrees with the {\it Cayley--Klein} metric on the Beltrami--Klein model of $n$-dimensional hyperbolic geometry \cite[p. 1233]{SKJL2013UBL}.

Note that the Euclidean norm is indeed a gyronorm on the Einstein gyrogroup. This follows from the fact that
$$
\norm{\vec{u}\oplus_E\vec{v}} \leq \norm{\vec{u}}\oplus \norm{\vec{v}} = \dfrac{\norm{\vec{u}} + \norm{\vec{v}}}{1+\norm{\vec{u}}\norm{\vec{v}}}\leq \norm{\vec{u}} + \norm{\vec{v}},
$$
where the first inequality is worked out in Proposition 3.3 of \cite{SKJL2013UBL} and $\oplus$ is the restricted Einstein addition on the open interval $(-1, 1)$ given by
$
r\oplus s = \dfrac{r+s}{1+rs}
$
for all $r, s\in (-1, 1)$. Denote by $d_e$ the gyronorm metric induced by the Euclidean norm. That is,
\begin{equation}
d_e(\vec{u},\vec{v}) = \norm{-\vec{u}\oplus_E\vec{v}}
\end{equation}
for all $\vec{u},\vec{v}\in\B$. In fact, $d_e$ is known as the {\it Einstein gyrometric} \cite[p. 222]{AU2008AHG}. Our results provide an elegant proof that the Einstein gyrometric is indeed a metric on the open unit ball of $\R^n$.

Note that $d_e(\vec{u}, \vec{v}) \leq d_E(\vec{u}, \vec{v})$
for all $\vec{u}, \vec{v}\in\B$ because $x\mapsto x - \tanh^{-1}{x}$ defines a strictly decreasing function on the open interval $(0, 1)$. This implies that the topology generated by $d_E$ is finer than the topology generated by $d_e$. Next, we prove that the topology generated by $d_e$ is finer than the topology generated by $d_E$. Let $\vec{u}\in\B$ and let $\epsilon>0$. Choose $\delta = \tanh{\epsilon}$. Let $\vec{v}\in B_{d_e}(\vec{u}, \delta)$. Then $d_e(\vec{u}, \vec{v})<\delta$, that is,
$\norm{-\vec{u}\oplus_E\vec{v}} < \tanh{\epsilon}$. It follows that
$$
d_E(\vec{u},\vec{v}) = \tanh^{-1}{\norm{-\vec{u}\oplus_E\vec{v}}} < \epsilon
$$
for $\tanh^{-1}$ is a strictly increasing function on its domain. Thus, $\vec{v}\in B_{d_E}(\vec{u}, \epsilon)$. This proves that $B_{d_e}(\vec{u}, \delta)\subseteq B_{d_E}(\vec{u}, \epsilon)$. Therefore, $d_e$ and $d_E$ generate the same topology on $\B$.

\subsubsection{An $n$-dimensional Euclidean version of the M\"{o}bius gyrogroup}
The {\it M\"{o}bius} gyrogroup consists of the same underlying set as the Einstein \mbox{gyrogroup}, but its binary operation, called {\it M\"{o}bius addition}, is defined by
\begin{equation}\label{eqn: Mobius addition}
\vec{u}\oplus_M\vec{v} = \dfrac{(1 + 2\gen{\vec{u},\vec{v}} +
\norm{\vec{v}}^2)\vec{u} + (1 - \norm{\vec{u}}^2)\vec{v}}{1 +
2\gen{\vec{u},\vec{v}} + \norm{\vec{u}}^2\norm{\vec{v}}^2}
\end{equation}
for all $\vec{u}, \vec{v}\in\B$. The zero vector acts an the identity of $\B$ under $\oplus_M$. For each $\vec{v}\in\B$, the negative vector $-\vec{v}$ acts as the inverse of $\vec{v}$ with respect to M\"{o}bius addition. By Proposition 2.4 of \cite{SKJL2013UBL}, the gyroautomorphisms of $(\B, \oplus_M)$ are orthogonal in the sense that
$$
\norm{\gyr{\vec{u}, \vec{v}}{\vec{w}}} = \norm{\vec{w}}
$$
for all $\vec{u}, \vec{v}, \vec{w}\in\B$.

By Proposition 2.3 of \cite{SKJL2013UBL}, the map $\Phi$ defined by
\begin{equation}
\Phi(\vec{v}) = \dfrac{2}{1+\norm{\vec{v}}^2}\vec{v},\qquad \vec{v}\in\B
\end{equation}
is a gyrogroup isomorphism from $(\B, \oplus_M)$ to $(\B, \oplus_E)$. In particular, $\Phi$ is a bijection from $\B$ to itself. Let $\norm{\cdot}_E$ be the gyronorm on the Einstein gyrogroup defined by \eqref{eqn: length function on Einstein gyrogroup}. Define a function $\norm{\cdot}_M$ by the equation
\begin{equation}\label{eqn: length function of Mobius gyrogroup}
\norm{\vec{v}}_M = \dfrac{1}{2}\norm{\Phi(\vec{v})}_E,\qquad \vec{v}\in\B.
\end{equation}

\begin{theorem}\label{thm: length function on Mobius gyrogroup}
The function $\norm{\cdot}_M$ defined by \eqref{eqn: length function of Mobius gyrogroup} is a gyronorm on the M\"{o}bius gyrogroup.
\end{theorem}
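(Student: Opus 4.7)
The strategy is to transport the gyronorm $\|\cdot\|_E$ from the Einstein gyrogroup back to the M\"obius gyrogroup via the isomorphism $\Phi$. Since $\Phi\colon (\B,\oplus_M)\to (\B,\oplus_E)$ is a gyrogroup isomorphism (Proposition 2.3 of \cite{SKJL2013UBL}) and $\|\cdot\|_E$ has already been shown to be a gyronorm in Theorem \ref{thm: length function on Einstein gyrogroup}, each of the four defining properties of a gyronorm should follow by inserting $\Phi$ and invoking the corresponding property of $\|\cdot\|_E$. The constant factor $\tfrac{1}{2}$ is harmless: it does not affect positivity, inverse-invariance, or gyration-invariance, and it multiplies both sides of the triangle inequality.

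The plan is to verify the four axioms of Definition \ref{def: length function on gyrogroup} in order. For positivity I would observe that $\|\vec{v}\|_M = \tfrac{1}{2}\|\Phi(\vec{v})\|_E \geq 0$, and that $\|\vec{v}\|_M = 0$ forces $\Phi(\vec{v}) = \vec{0}$; because $\Phi$ is a bijection sending $\vec{0}$ to $\vec{0}$, this gives $\vec{v} = \vec{0}$. For invariance under inversion, the key remark is that $\Phi$, being a gyrogroup homomorphism, commutes with the gyrogroup inverse, so $\Phi(\ominus_M \vec{v}) = \ominus_E \Phi(\vec{v}) = -\Phi(\vec{v})$, and then property \eqref{item: invariant under taking inverses} of $\|\cdot\|_E$ finishes the step.

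For subadditivity I would compute
\begin{equation*}
\|\vec{u}\oplus_M \vec{v}\|_M
= \tfrac{1}{2}\|\Phi(\vec{u}\oplus_M \vec{v})\|_E
= \tfrac{1}{2}\|\Phi(\vec{u})\oplus_E \Phi(\vec{v})\|_E
\leq \tfrac{1}{2}\bigl(\|\Phi(\vec{u})\|_E+\|\Phi(\vec{v})\|_E\bigr)
= \|\vec{u}\|_M+\|\vec{v}\|_M,
\end{equation*}
using that $\Phi$ preserves the binary operation together with property \eqref{item: subadditivity} of $\|\cdot\|_E$. For invariance under gyrations I would use the identity $\vphi(\gyr{a,b}{c}) = \gyr{\vphi(a),\vphi(b)}{\vphi(c)}$ listed in Table \ref{tab: properties of gyrogroups}, applied to $\vphi = \Phi$, to push $\Phi$ through the M\"obius gyration and turn it into an Einstein gyration acting on $\Phi(\vec{x})$; then property \eqref{item: invariant under gyrations} of $\|\cdot\|_E$ gives the required equality.

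No step is a genuine obstacle; the only points one must be careful about are (i) that $\Phi$ actually is a bijective gyrogroup isomorphism, which is cited, and (ii) that homomorphisms of gyrogroups preserve both inverses and gyrations, which is standard and recorded in Table \ref{tab: properties of gyrogroups}. The proof thus amounts to a clean pullback argument, and the slight rescaling by $\tfrac{1}{2}$ is inherited without difficulty.
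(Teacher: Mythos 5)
Your proposal is correct and follows exactly the paper's approach: the paper's proof is the one-line observation that the result follows from $\|\cdot\|_E$ being a gyronorm and $\Phi$ being a gyrogroup isomorphism, and your argument simply spells out that pullback axiom by axiom. Nothing is missing; you have just written out the details the paper leaves implicit.
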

\begin{proof}
The theorem follows directly from the fact that $\norm{\cdot}_E$ defines a gyronorm on $(\B, \oplus_E)$ and that $\Phi$ is a gyrogroup isomorphism.
\end{proof}

By Theorems \ref{thm: metric induced by length function} and \ref{thm: length function on Mobius gyrogroup}, 
\begin{equation}
d_M(\vec{u}, \vec{v}) = \dfrac{1}{2}\tanh^{-1}{\norm{\Phi(-\vec{u}\oplus_M \vec{v})}}
\end{equation}
defines a metric on $\B$. This metric is called the {\it rapidity} metric on the M\"{o}bius \mbox{gyrogroup} \cite{SKJL2013UBL, AU2008AHG}. It is known that the rapidity metric on the M\"{o}bius gyrogroup is half of the {\it Poincar\'{e}} metric with curvature $-1$ on the Poincar\'{e} model of $n$-dimensional hyperbolic geometry \cite[Theorem 3.7]{SKJL2013UBL}.

\section{Topological and geometric structures}\label{sec: topological structure}
Normed gyrogroups have nice topological and geometric structures. Further, they share certain remarkable analogies with normed groups. In fact, several of the results proved in this section are inspired by the expository article of Bingham and Ostaszewski \cite{MR2743093} that treats normed and topological groups. Roughly speaking, normed gyrogroups are left invariant, homogeneous, and isotropic.

\subsection{Topological and geometric properties}
\begin{theorem}\label{thm: left gyrotranslation invariant under length metric}
Let $G$ be a normed gyrogroup. Then the gyronorm metric is in-variant under left gyrotranslation:
\begin{equation}
d(a\oplus x, a\oplus y) = d(x, y)
\end{equation}
for all $a, x, y\in G$. Hence, every left gyrotranslation of $G$ is an isometry of $G$ with respect to the gyronorm metric.
\end{theorem}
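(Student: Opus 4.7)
The plan is to unpack the left-hand side $d(a\oplus x, a\oplus y) = \norm{\ominus(a\oplus x)\oplus(a\oplus y)}$ until only $\ominus x\oplus y$ remains (possibly inside a gyration), and then apply the invariance of the gyronorm under gyrations. The key algebraic trick is the left gyroassociative law combined with the left cancellation law.

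First I would rewrite $a\oplus y$ by inserting a copy of $x$: starting from the identity $x\oplus(\ominus x\oplus y)=y$ (itself a consequence of the left gyroassociative law, since $\gyr{x,\ominus x}{}$ is easily shown to be the identity automorphism), I would apply the left gyroassociative law to obtain
\begin{equation*}
a\oplus y = a\oplus\bigl(x\oplus(\ominus x\oplus y)\bigr) = (a\oplus x)\oplus\gyr{a,x}{(\ominus x\oplus y)}.
\end{equation*}
Then I would left-cancel $\ominus(a\oplus x)$ against $(a\oplus x)$ on the right to arrive at
\begin{equation*}
\ominus(a\oplus x)\oplus(a\oplus y) = \gyr{a,x}{(\ominus x\oplus y)}.
\end{equation*}

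Now the conclusion is immediate from the gyronorm axioms: taking gyronorms of both sides and using invariance under gyrations,
\begin{equation*}
d(a\oplus x,a\oplus y) = \norm{\gyr{a,x}{(\ominus x\oplus y)}} = \norm{\ominus x\oplus y} = d(x,y).
\end{equation*}
The final sentence of the theorem then follows at once: the left gyrotranslation $L_a\colon z\mapsto a\oplus z$ satisfies $d(L_a(x),L_a(y))=d(x,y)$, so it is an isometry of $(G,d)$.

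I do not expect any serious obstacle; the only nuisance is recording the auxiliary identity $x\oplus(\ominus x\oplus y)=y$, which is not in Table 1 explicitly but is standard and follows from left gyroassociativity together with $\gyr{x,\ominus x}{}=\mathrm{id}$. If preferred, the same simplification can be obtained directly from the tabulated identity $(\ominus a\oplus b)\oplus\gyr{\ominus a,b}{(\ominus b\oplus c)} = \ominus a\oplus c$ with suitable substitutions, which yields the same cancellation up to a gyration that is killed by axiom~(4) of Definition~\ref{def: length function on gyrogroup}.
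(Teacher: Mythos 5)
Your proof is correct, and it takes a cleaner algebraic route than the paper's. You reduce everything to the single element-level identity $\ominus(a\oplus x)\oplus(a\oplus y)=\gyr{a,x}{(\ominus x\oplus y)}$ — which is exactly the gyrator identity of Table \ref{tab: properties of gyrogroups} evaluated at $c=\ominus x\oplus y$, once you know $x\oplus(\ominus x\oplus y)=y$ — and then invoke axiom (\ref{item: invariant under gyrations}) of Definition \ref{def: length function on gyrogroup} once. (Your auxiliary identity needs no appeal to $\gyr{x,\ominus x}{}=\mathrm{id}$: it follows in one line from the tabulated left cancellation law $\ominus a\oplus(a\oplus x)=x$ with $a:=\ominus x$ together with $\ominus(\ominus x)=x$.) The paper instead works at the level of norms throughout, chaining the inverse-of-sum formula $\ominus(a\oplus x)=\gyr{a,x}{(\ominus x\ominus a)}$, a pass of the gyration across $\oplus$ via inversive symmetry and gyration-invariance of the norm, the even property, and finally the telescoping identity $(\ominus a\oplus b)\oplus\gyr{\ominus a,b}{(\ominus b\oplus c)}=\ominus a\oplus c$ — essentially the alternative you sketch in your last sentence. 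Your version isolates the exact gyrogroup identity responsible for left invariance and makes it transparent that only the gyration-invariance axiom of the gyronorm is used; the paper's version stays closer to the identities as listed in its table. Either is acceptable.
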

\begin{proof}
Let $a\in G$. Recall that the {\it left gyrotranslation} by $a$, denoted by $L_a$, is defined by $L_a(x) = a\oplus x$ for all $x\in G$. By Theorem 18 (1) of \cite{TS2016TAG}, $L_a$ is a bijection from $G$ to itself. 

Next, we prove that the gyronorm metric $d$ is invariant under $L_a$. Let $x, y\in G$. Using appropriate properties of gyrogroups in Table \ref{tab: properties of gyrogroups}, together with the defining properties of a gyronorm, we obtain 
\begin{align*}
d(L_a(x), L_a(y)) &= \norm{\ominus (a\oplus x)\oplus (a\oplus y)}\\
{} &= \norm{\gyr{a, x}{(\ominus x\ominus a)}\oplus(a\oplus y)}\\
{} &= \norm{(\ominus x\ominus a)\oplus\gyr{x, a}{(a\oplus y)}}\\
{} &= \norm{(\ominus x\ominus a)\oplus\gyr{\ominus x, \ominus a}{(a\oplus y)}}\\
{} &= \norm{\ominus x\oplus y}\\
{} &= d(x, y).\qedhere
\end{align*}
\end{proof}

\begin{corollary}\label{cor: isometry imply homeomorphism}
If $G$ is a normed gyrogroup, then every left gyrotranslation of $G$ is a homeomorphism.
\end{corollary}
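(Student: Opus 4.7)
The plan is to derive the corollary immediately from the preceding theorem together with the general fact (already noted in the excerpt) that a bijective isometry of a metric space onto itself is automatically a homeomorphism. Concretely, fix $a \in G$. By Theorem \ref{thm: left gyrotranslation invariant under length metric}, the left gyrotranslation $L_a$ is an isometry of $(G,d)$ onto itself, and in particular it is a bijection. An isometry is Lipschitz with constant $1$ and hence continuous, so it only remains to identify a continuous inverse.

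The key observation is that the inverse of $L_a$ is itself a left gyrotranslation, namely $L_a^{-1} = L_{\ominus a}$. This follows from the left cancellation law in Table \ref{tab: properties of gyrogroups}: for every $x \in G$,
\[
(L_{\ominus a} \circ L_a)(x) = \ominus a \oplus (a \oplus x) = x,
\]
and since $L_a$ is already known to be a bijection, this one-sided identity forces $L_{\ominus a} = L_a^{-1}$.

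Applying Theorem \ref{thm: left gyrotranslation invariant under length metric} once more to the element $\ominus a$, we conclude that $L_{\ominus a}$ is also an isometry, hence continuous. Thus $L_a$ is a continuous bijection with continuous inverse, i.e., a homeomorphism of $G$.

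There is no real obstacle here; the only point that requires a moment of care is invoking the right structural fact about inverses of left gyrotranslations (as opposed to trying to build an inverse from scratch, or worrying that gyroassociativity might spoil the identification $L_a^{-1} = L_{\ominus a}$). Once that is in place, the statement is a one-line consequence of the preceding theorem.
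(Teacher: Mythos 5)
Your proof is correct and follows essentially the same route as the paper, which leaves the corollary as an immediate consequence of Theorem \ref{thm: left gyrotranslation invariant under length metric} together with the earlier remark that every isometry of a normed gyrogroup onto itself is a homeomorphism (since the inverse of a surjective isometry is again an isometry). Your explicit identification $L_a^{-1} = L_{\ominus a}$ via the left cancellation law is a valid, slightly more concrete way of seeing that the inverse is continuous, but it is not a substantively different argument.
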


\begin{theorem}\label{thm: automorphism invariant under length metric}
Let $G$ be a normed gyrogroup. If $\tau\in\aut{G}$ and $\norm{\tau(x)} = \norm{x}$ for all $x\in G$, then $\tau$ is an isometry of $G$ with respect to the gyronorm metric.
\end{theorem}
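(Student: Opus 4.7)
The plan is to unpack the definition of the gyronorm metric, push $\tau$ inside using the fact that it is an automorphism (so it commutes with $\oplus$ and with inversion), and then apply the hypothesis that $\tau$ preserves the gyronorm pointwise.

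Concretely, I would start with arbitrary $x,y\in G$ and compute
\[
d(\tau(x),\tau(y)) = \norm{\ominus\tau(x)\oplus\tau(y)}.
\]
The first key step is to observe that because $\tau$ is a gyrogroup automorphism, it carries identity to identity and inverses to inverses, so $\ominus\tau(x)=\tau(\ominus x)$; this is a standard consequence of $\tau$ respecting $\oplus$ and the uniqueness of two-sided inverses (mentioned in the paragraph after Definition~\ref{def: gyrogroup}). Combined with $\tau(a)\oplus\tau(b)=\tau(a\oplus b)$, this collapses the expression to
\[
\norm{\ominus\tau(x)\oplus\tau(y)} = \norm{\tau(\ominus x\oplus y)}.
\]

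The second and final step is to invoke the hypothesis $\norm{\tau(u)}=\norm{u}$ for all $u\in G$, applied with $u=\ominus x\oplus y$, yielding
\[
\norm{\tau(\ominus x\oplus y)} = \norm{\ominus x\oplus y} = d(x,y).
\]
Combining the two equalities gives $d(\tau(x),\tau(y))=d(x,y)$, so $\tau$ is an isometry.

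There is no real obstacle here: once one recognizes that an automorphism in the gyrogroup sense automatically commutes with inversion, the argument is a one-line unraveling. Unlike the proof of Theorem~\ref{thm: left gyrotranslation invariant under length metric}, no gyration identities from Table~\ref{tab: properties of gyrogroups} are needed, because $\tau$ distributes across $\oplus$ on the nose rather than up to a gyration. The only thing to be careful about is justifying $\tau(\ominus x)=\ominus\tau(x)$ explicitly if the reader is not already comfortable with it.
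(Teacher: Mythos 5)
Your proof is correct and follows exactly the same route as the paper's: rewrite $d(\tau(x),\tau(y))=\norm{\ominus\tau(x)\oplus\tau(y)}=\norm{\tau(\ominus x\oplus y)}=\norm{\ominus x\oplus y}=d(x,y)$. The only difference is that you spell out the (true and standard) fact that an automorphism commutes with inversion, which the paper takes for granted.
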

\begin{proof}
By assumption, 
$$
d(\tau(x), \tau(y)) = \norm{\ominus \tau(x)\oplus\tau(y)} = \norm{\tau(\ominus x\oplus y)} = \norm{\ominus x\oplus y} = d(x, y)
$$
and so $\tau$ defines an isometry of $G$.
\end{proof}

\begin{corollary}\label{cor: gyr[a, b] as isometry}
If $G$ is a normed gyrogroup, then the gyroautomorphisms of $G$ are isometries (and also homeomorphisms) of $G$.
\end{corollary}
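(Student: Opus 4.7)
The plan is to deduce this as an immediate application of Theorem \ref{thm: automorphism invariant under length metric}. That theorem says that any norm-preserving automorphism of $G$ is an isometry, so the task reduces to checking the two hypotheses for the gyroautomorphisms $\gyr{a,b}{}$.

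First, I would note that by axiom (G3) in Definition \ref{def: gyrogroup}, $\gyr{a,b}{} \in \aut{G}$, so the automorphism hypothesis is immediate. Second, the norm-preservation hypothesis $\norm{\gyr{a,b}{x}} = \norm{x}$ for all $x \in G$ is exactly property (\ref{item: invariant under gyrations}) in Definition \ref{def: length function on gyrogroup}, which is built into the very definition of a gyronorm. Applying Theorem \ref{thm: automorphism invariant under length metric} with $\tau = \gyr{a,b}{}$ then yields that each gyroautomorphism is an isometry of $(G, d)$.

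For the homeomorphism assertion, I would invoke the remark made just after Theorem \ref{thm: metric induced by length function}: any isometry of a normed gyrogroup to itself is automatically a homeomorphism, since isometries are continuous and the inverse of an isometry is again an isometry. Since each gyroautomorphism is bijective (being an automorphism) and an isometry, it is in particular a homeomorphism.

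There is no real obstacle here; the corollary is essentially a direct combination of the invariance axiom built into the gyronorm and the general norm-preservation criterion of Theorem \ref{thm: automorphism invariant under length metric}, so the proof amounts to citing these two facts.
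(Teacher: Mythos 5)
Your proposal is correct and is exactly the argument the paper intends (the corollary is left without proof precisely because it follows immediately from axiom (G3), property (\ref{item: invariant under gyrations}) of Definition \ref{def: length function on gyrogroup}, and Theorem \ref{thm: automorphism invariant under length metric}, with the homeomorphism claim coming from the remark after Theorem \ref{thm: metric induced by length function}). No gaps.
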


\begin{theorem}[Homogeneity]
If $G$ is a normed gyrogroup, then $G$ is homogeneous in the sense that if $x$ and $y$ are arbitrary points of $G$, then there is an isometry $T\colon G\to G$ (and also a homeomorphism of $G$) such that $T(x) = y$.
\end{theorem}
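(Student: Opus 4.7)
My plan is to construct the desired isometry $T$ explicitly as a composition of two left gyrotranslations. Specifically, I would set $T = L_y \circ L_x^{-1}$. This makes sense because the proof of Theorem \ref{thm: left gyrotranslation invariant under length metric} already noted (via Theorem 18 (1) of \cite{TS2016TAG}) that every left gyrotranslation $L_a$ is a bijection of $G$, so the inverse map $L_x^{-1} \colon G \to G$ exists.

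The first step is to check that $T$ is an isometry and a homeomorphism. By Theorem \ref{thm: left gyrotranslation invariant under length metric}, both $L_x$ and $L_y$ are isometries for the gyronorm metric $d$. Since the inverse of an isometric bijection is itself an isometry (it preserves distances because its forward map does), $L_x^{-1}$ is an isometry, and then the composition $T = L_y \circ L_x^{-1}$ is an isometry. Corollary \ref{cor: isometry imply homeomorphism} (or simply composing two homeomorphisms) yields that $T$ is a homeomorphism as well.

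The second step is to verify $T(x) = y$. Here I would use the fact, recalled right after Definition \ref{def: gyrogroup}, that $e$ is a two-sided identity in any gyrogroup; in particular $L_x(e) = x \oplus e = x$, so $L_x^{-1}(x) = e$. Consequently
\[
T(x) = L_y(L_x^{-1}(x)) = L_y(e) = y \oplus e = y,
\]
as required. There is no genuine obstacle: the argument is a direct gyrogroup analogue of the well-known fact that a topological group acts transitively on itself by left translation, and it goes through once one has Theorem \ref{thm: left gyrotranslation invariant under length metric} in hand together with the bilaterality of $e$.
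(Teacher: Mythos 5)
Your proposal is correct and is essentially the paper's own argument: the paper takes $T = L_y \circ L_{\ominus x}$, which coincides with your $L_y \circ L_x^{-1}$ since $L_x^{-1} = L_{\ominus x}$ in a gyrogroup, and verifies $T(x) = y$ the same way. The only cosmetic difference is that you invoke the abstract inverse of the bijection $L_x$ rather than writing it explicitly as the left gyrotranslation by $\ominus x$.
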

\begin{proof}
Let $x, y\in G$. Define $T = L_y\circ L_{\ominus x}$. By Theorem \ref{thm: left gyrotranslation invariant under length metric}, $T$ is an isometry of $G$. Further, 
$$
T(x) = (L_y\circ L_{\ominus x})(x) = L_y(L_{\ominus x}(x)) = L_y(\ominus x\oplus x) = L_y(e) = y\oplus e = y.
$$
Thus, $G$ is homogeneous.
\end{proof}

\begin{theorem}[Isotropy]\label{thm: isotropic geometry, gyrogroup with length}
If $G$ is a nondegenerate normed gyrogroup; that is, $G$ has a nonidentity gyroautomorphism, then $G$ is isotropic in the sense that for each point $p\in G$, there exists a nonidentity isometry $T$ of $G$ such that $T(p) = p$.
\end{theorem}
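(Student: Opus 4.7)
The plan is to use the nonidentity gyroautomorphism to build a point-stabilizing isometry by conjugation with left gyrotranslations, exploiting the fact that gyroautomorphisms automatically fix the identity $e$ while left gyrotranslations can move $e$ anywhere.

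Concretely, pick $a, b \in G$ with $\gamma := \gyr{a,b}{} \ne \mathrm{id}_G$, which exists by the nondegeneracy hypothesis. By Corollary \ref{cor: gyr[a, b] as isometry}, $\gamma$ is an isometry of $G$, and as an automorphism it fixes the identity: $\gamma(e) = e$. For an arbitrary $p \in G$, set
\[
T \;=\; L_p \circ \gamma \circ L_{\ominus p}.
\]
Since each $L_c$ is an isometry by Theorem \ref{thm: left gyrotranslation invariant under length metric} and $\gamma$ is an isometry, $T$ is an isometry. Using the left cancellation law $\ominus p \oplus p = e$, we compute
\[
T(p) \;=\; L_p\bigl(\gamma(\ominus p \oplus p)\bigr) \;=\; L_p(\gamma(e)) \;=\; L_p(e) \;=\; p,
\]
so $p$ is fixed.

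It remains to verify that $T$ is not the identity map. This is where I would be most careful: I need $L_{\ominus p}$ to be a genuine two-sided inverse of $L_p$. By the left cancellation law $\ominus p \oplus (p \oplus x) = x$ (entry 2 of Table \ref{tab: properties of gyrogroups}), so $L_{\ominus p} \circ L_p = \mathrm{id}_G$, and similarly $L_p \circ L_{\ominus p} = \mathrm{id}_G$. Hence if $T$ were the identity, composing on the left by $L_{\ominus p}$ and on the right by $L_p$ would force $\gamma = \mathrm{id}_G$, contrary to the choice of $a, b$. Therefore $T$ is a nonidentity isometry with $T(p) = p$, proving isotropy. The only conceptual step beyond routine bookkeeping is recognizing that conjugation by $L_p$ transports the stabilizer of $e$ (which is already nontrivial by nondegeneracy, via $\gamma$) to a nontrivial stabilizer of $p$.
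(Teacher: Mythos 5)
Your proof is correct and follows essentially the same route as the paper: choose a nonidentity gyroautomorphism $\gamma$, conjugate it by the left gyrotranslation $L_p$ to get $T = L_p\circ\gamma\circ L_{\ominus p}$, and use that $L_{\ominus p}$ is the two-sided inverse of $L_p$ to conclude $T\ne\mathrm{id}$. The only difference is that you spell out the verification of $L_p\circ L_{\ominus p}=\mathrm{id}_G$ via the left cancellation law, which the paper leaves implicit.
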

\begin{proof}
Let $p$ be an arbitrary point of $G$. Let $\tau$ be a {\it nonidentity} gyroautomorphism of $G$. Then $\tau(e) = e$. Define $T = L_p\circ \tau\circ L_{\ominus p}$. Note that $T$ is an isometry of $G$, being the composite of isometries of $G$. Further, $T(p) = p$. Note that $T$ is not the identity transformation of $G$; otherwise, we would have $I = L_p\circ\tau\circ L_{\ominus p} = L_p\circ\tau\circ L_p^{-1}$ and would have $\tau = I$, a contradiction.
\end{proof}

Recall that the famous Mazur--Ulam theorem states that any isometry between normed linear spaces over $\R$ that fixes the zero vector must be linear; see, for instance, \cite[Theorem 1.3.5]{MR1957004}. Extensions of the Mazur--Ulam theorem are studied by Rassias \cite{MR2316585, MR1972385} and by Rassias et al. \cite{MR1111437, MR1845699}. Further, the Mazur--Ulam theorem is examined in the setting of {\it gyrovector spaces} by Abe \cite{TA2014GPM} and by Abe and Hatori \cite{MR3412000}. Here, we prove a normed-gyrogroup version of the Mazur--Ulam theorem.

\begin{theorem}
Let $G$ be a normed gyrogroup. If $f$ is an isometry of $G$ with respect to the gyronorm metric, then 
\begin{equation}
f = L_{f(e)}\circ \rho,
\end{equation}
where $\rho$ is an isometry of $G$ that leaves the gyrogroup identity fixed.
\end{theorem}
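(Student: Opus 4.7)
The plan is to define $\rho$ explicitly by unpicking the relation $f = L_{f(e)} \circ \rho$, then verify that this $\rho$ is an isometry fixing $e$. Concretely, I would set
$$
\rho = L_{\ominus f(e)} \circ f,
$$
motivated by the observation that in any gyrogroup the map $L_{\ominus a}$ inverts $L_a$: indeed, $L_{\ominus a}(L_a(x)) = \ominus a \oplus (a \oplus x) = x$ for every $x \in G$ by the left cancellation law listed in Table \ref{tab: properties of gyrogroups}, so $L_a^{-1} = L_{\ominus a}$. With this candidate in hand, there are three things to check: that $\rho$ is an isometry, that $\rho(e) = e$, and that $L_{f(e)} \circ \rho = f$.

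The first point is immediate because $\rho$ is a composite of isometries: $f$ is an isometry by hypothesis, and $L_{\ominus f(e)}$ is an isometry by Theorem \ref{thm: left gyrotranslation invariant under length metric}. The second point is a direct computation: $\rho(e) = L_{\ominus f(e)}(f(e)) = \ominus f(e) \oplus f(e) = e$. The third point follows from $L_{\ominus f(e)}$ being the inverse of $L_{f(e)}$, giving
$$
L_{f(e)} \circ \rho = L_{f(e)} \circ L_{\ominus f(e)} \circ f = f.
$$
I do not anticipate any real obstacle here; the result is essentially a translation of the identity $f = L_{f(e)} \circ (L_{f(e)}^{-1} \circ f)$ into gyrogroup language, and the only point worth emphasizing is that the inversion of $L_{f(e)}$ is available in the gyrogroup setting thanks to the left cancellation law, so that the argument proceeds exactly as in the group case.
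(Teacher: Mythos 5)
Your proposal is correct and is essentially the paper's own argument: the paper obtains the decomposition $f = L_{f(e)}\circ\rho$ by citing an external proposition and then identifies $\rho = L_{\ominus f(e)}\circ f$, exactly the map you construct directly, concluding as you do that $\rho$ is an isometry fixing $e$ because it is a composite of isometries and $L_{f(e)}^{-1} = L_{\ominus f(e)}$. Your version is merely more self-contained, since you verify the left cancellation facts rather than quoting them.
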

\begin{proof}
Suppose that $f$ is an isometry of $G$. By definition, $f$ is a permutation of $G$. By Proposition 19 of \cite{TS2016TAG}, $f = L_{f(e)}\circ\rho$, where $\rho$ is a permutation of $G$ that fixes $e$. As in the proof of Theorem 18 of \cite{TS2016TAG}, $ L_{f(e)}^{-1} = L_{\ominus f(e)}$ and so $\rho = L_{\ominus f(e)}\circ f$. Hence, $\rho$ is an isometry of $G$, being the composite of isometries of $G$.
\end{proof}

\subsection{A characterization of normed gyrogroups}

Note that the gyronorm of an arbitrary normed gyrogroup can be recovered by its corresponding metric:
\begin{equation}
\norm{x} = d(e, x),\qquad x\in G.
\end{equation}
It turns out that Theorem \ref{thm: left gyrotranslation invariant under length metric} provides a characterizing property of normed gyro-groups, as shown in the following theorem.

\begin{theorem}\label{thm: length function from metric invariant under left gyrotranslation}
Let $G$ be a gyrogroup with a metric $d$. If $d$ is invariant under left gyrotranslation, that is,
$$
d(a\oplus x, a\oplus y) = d(x, y)
$$
for all $a, x, y\in G$, then $\norm{x} = d(e, x)$ defines a gyronorm on $G$ that generates the same metric.
\end{theorem}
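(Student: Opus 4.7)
The plan is to verify the four axioms of Definition \ref{def: length function on gyrogroup} for $\norm{x} := d(e,x)$, and then check that the gyronorm metric it induces coincides with the original $d$.

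Positivity is immediate from the fact that $d$ is a metric: $d(e,x) \geq 0$ with equality if and only if $x = e$. For invariance under inversion, I would apply left gyrotranslation invariance with $a = x$ to the pair $(\ominus x, e)$, obtaining $d(\ominus x, e) = d(x \oplus \ominus x, x \oplus e) = d(e, x)$ (using $x\oplus \ominus x = e$ and $x\oplus e = x$), and then use symmetry of $d$ to conclude $\norm{\ominus x} = d(e, \ominus x) = d(\ominus x, e) = d(e,x) = \norm{x}$. For subadditivity, the metric triangle inequality gives $d(e, x \oplus y) \leq d(e, x) + d(x, x \oplus y)$, and left gyrotranslation invariance with $a = x$ rewrites $d(x, x \oplus y) = d(x \oplus e, x \oplus y) = d(e, y)$, yielding $\norm{x \oplus y} \leq \norm{x} + \norm{y}$.

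The main obstacle is the fourth axiom, invariance under gyrations, since left gyrotranslation invariance of $d$ does not obviously imply invariance under the (a priori unrelated) gyroautomorphisms. Here I would invoke the gyrator identity from Table \ref{tab: properties of gyrogroups}, namely $\gyr{a,b}{x} = \ominus(a \oplus b) \oplus (a \oplus (b \oplus x))$, and left-gyrotranslate successively. Starting from $\norm{\gyr{a,b}{x}} = d(e, \ominus(a \oplus b) \oplus (a \oplus (b \oplus x)))$, a left gyrotranslation by $a \oplus b$ combined with the identity $c \oplus (\ominus c \oplus z) = z$ (a consequence of the left cancellation law) reduces the right-hand side to $d(a \oplus b, a \oplus (b \oplus x))$. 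A further left gyrotranslation by $\ominus a$, followed by left cancellation, gives $d(b, b \oplus x)$; one more left gyrotranslation by $\ominus b$ reduces this to $d(e, x) = \norm{x}$.

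Finally, to see that $\norm{\cdot}$ regenerates $d$, I would compute the induced metric $d'(x,y) = \norm{\ominus x \oplus y} = d(e, \ominus x \oplus y)$ and apply left gyrotranslation invariance with $a = x$: this gives $d(e, \ominus x \oplus y) = d(x \oplus e, x \oplus (\ominus x \oplus y)) = d(x, y)$, using $x \oplus e = x$ and left cancellation. Hence $d' = d$, completing the proof.
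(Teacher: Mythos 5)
Your proof is correct and follows essentially the same route as the paper: each gyronorm axiom is verified via left-gyrotranslation invariance together with left cancellation, and axiom (4) is handled by exactly the same gyrator-identity trick of successively translating by $a\oplus b$, $\ominus a$, and $\ominus b$. The only difference is that you explicitly check the final clause that $\norm{\cdot}$ regenerates $d$, which the paper leaves implicit; that is a small but welcome addition.
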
 
\begin{proof}
It is clear that $\norm{x} \geq 0$ and $\norm{x} = 0$ if and only if $x = e$. Let $x\in G$. By the left gyrotranslation invariant, $\norm{\ominus x} = d(e, \ominus x) = d(x\oplus e, x\ominus x) = d(x, e) = \norm{x}$. 

Let $x, y\in G$. Direct computation shows that 
$$
\norm{x\oplus y} = d(x\oplus y, e) = d(y, \ominus x) \leq d(y, e)+d(e, \ominus x) = \norm{x}+\norm{y}.
$$
Let $a, b, x\in G$. By the gyrator identity, 
\begin{align*}
\norm{\gyr{a, b}{x}} &= d(\gyr{a, b}{x}, e) \\
{} &= d(\ominus(a\oplus b)\oplus(a\oplus(b\oplus x)), e)\\
{} &= d(a\oplus(b\oplus x), a\oplus b))\\
{} &= d(b\oplus x, b)\\
{} &= d(x, e)\\
{} &= \norm{x}.\qedhere
\end{align*}
\end{proof}

In view of Theorems \ref{thm: metric induced by length function}, \ref{thm: left gyrotranslation invariant under length metric}, and \ref{thm: length function from metric invariant under left gyrotranslation}, there is a one-to-one correspondence between the class of normed gyrogroups and the class of gyrogroups with a left-gyrotranslation-invariant metric:
$$
\set{\textrm{Normed gyrogroups}} \quad\longleftrightarrow\quad \set{(G, d), d \textrm{ left-gyrotranslation-invariant metric}}.
$$
One of the advantages of Theorem \ref{thm: length function from metric invariant under left gyrotranslation} is illustrated in the example below.
 
\begin{example}[The complex M\"{o}bius gyrogroup]
The Poincar\'{e} disk model consists of the open unit disk in the complex plane,
\begin{equation}
\D = \cset{z\in\C}{\abs{z}<1},
\end{equation}
and the (complex version of) Poincar\'{e} metric defined by
\begin{equation}\label{eqn: complex Mobius addition}
d_P(w, z) = 2\tanh^{-1}\Abs{\dfrac{w-z}{1-\lbar{w}z}}
\end{equation}
for all $w, z\in\D$. Here, the factor $2$ is added to \eqref{eqn: complex Mobius addition} so that the metric corresponds to a curvature of $-1$. 

A {\it complex} version of M\"{o}bius addition is defined by
\begin{equation}\label{eqn: complex mobius addition}
a\oplus_M b = \dfrac{a+b}{1+\lbar{a}b},\qquad a, b\in\D,
\end{equation}
which gives $\D$ the gyrogroup structure \cite{AU2008FMG}. It is not difficult to check that $0$ is the identity of $\D$, that the inverse of $a$ is $-a$, and that the gyroautomorphism generated by $a$ and $b$ is a disk rotation corresponding to the unimodular complex $\dfrac{1+a\lbar{b}}{1+\lbar{a}b}$. Using \eqref{eqn: complex Mobius addition} and \eqref{eqn: complex mobius addition}, we have by inspection that
$$
d_P(a\oplus_M w, a\oplus_M z) = d_P(w, z)
$$
for all $a, w, z\in\D$. Hence, by Theorem \ref{thm: length function from metric invariant under left gyrotranslation}, $\D$ forms a normed gyrogroup whose gyronorm is given by
$\norm{z} = 2\tanh^{-1}{\abs{z}}$ for all $z\in\D$. This leads to the well-known fact  that any M\"{o}bius transformation (also called a {\it conformal} self-map) of $\D$ of the form $$z\mapsto \dfrac{a+z}{1+\lbar{a}z},\qquad z\in\D,$$
where $a$ is a fixed element in $\D$, is an isometry of $\D$ with respect to the Poincar\'{e} metric. The study of M\"{o}bius transformations is an important topic in \mbox{mathematics}. This is evidenced by characterizations of M\"{o}bius transformations found in the \mbox{literature}; see, for instance, \cite{MR1371271, MR1485479, MR1653398, DEMIREL2013457, MR2729244, MR1772592, MR1850963}.
\hfill\qedsymbol\end{example}

\subsection{Sufficient conditions to be a topological gyrogroup}
In \cite{MR0047250}, Klee shows that in the class of groups with a metric $d$ the condition that $d(xy, ab)\leq d(x, a)+d(y, b)$ is equivalent to the bi-invariant of $d$. This motivates the following theorem for normed gyrogroups:

\begin{theorem}\label{thm: right gyro ineq iff Klee condition}
Let $G$ be a normed gyrogroup with the corresponding metric $d$. Then the following conditions are equivalent:
\begin{enumerate}[label=(\Roman*)]
\item\label{item: right-gyrotranslation ineq} Right-gyrotranslation inequality: $d(x\oplus a, y\oplus a) \leq d(x, y)$ for all $a, x, y\in G$;
\item\label{item: Klee's condition} Klee's condition:  $d(x\oplus y, a\oplus b)\leq d(x, a)+d(y, b)$ for all $a, b, x, y\in G$.
\end{enumerate}
\end{theorem}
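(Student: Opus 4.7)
Both implications admit short arguments that I will handle separately, and neither requires gyroassociativity or any of the auxiliary identities from Table \ref{tab: properties of gyrogroups}.

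For \ref{item: Klee's condition} $\Rightarrow$ \ref{item: right-gyrotranslation ineq} I would simply specialize Klee's condition so that the two right-hand factors agree. Renaming dummy variables in \ref{item: Klee's condition} and taking both of them equal to $a$, one obtains
\[
d(x \oplus a, y \oplus a) \leq d(x, y) + d(a, a) = d(x, y),
\]
since $d(a,a) = 0$. This is precisely \ref{item: right-gyrotranslation ineq}.

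For \ref{item: right-gyrotranslation ineq} $\Rightarrow$ \ref{item: Klee's condition} the plan is to introduce the intermediate point $a \oplus y$ and apply the ordinary triangle inequality, giving
\[
d(x \oplus y, a \oplus b) \leq d(x \oplus y, a \oplus y) + d(a \oplus y, a \oplus b).
\]
I would then bound the first summand by $d(x, a)$ using \ref{item: right-gyrotranslation ineq} (with $y$ playing the role of the common right-hand factor), and identify the second summand with $d(y, b)$ exactly, via the left-gyrotranslation invariance of $d$ established in Theorem \ref{thm: left gyrotranslation invariant under length metric}. Adding the two yields Klee's condition.

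The only genuine choice is the intermediate point, and I expect this to be the only subtle spot: the choice $a \oplus y$ is forced (rather than, say, $x \oplus b$) because that is the option for which one summand lines up with an instance of \ref{item: right-gyrotranslation ineq} and the other with an instance of left-gyrotranslation invariance. Beyond this, I do not foresee any obstacle.
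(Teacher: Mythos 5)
Your proof is correct and takes essentially the same route as the paper's: the backward implication is identical, and the forward implication is the same triangle-inequality argument through a mixed intermediate point, with one summand controlled by left-gyrotranslation invariance (Theorem \ref{thm: left gyrotranslation invariant under length metric}) and the other by the right-gyrotranslation inequality. The only inaccuracy is your side remark that the choice $a\oplus y$ is forced over $x\oplus b$: the latter works equally well (and is in fact the paper's choice), since $d(x\oplus y, x\oplus b) = d(y,b)$ by left invariance and $d(x\oplus b, a\oplus b)\leq d(x,a)$ by the right-gyrotranslation inequality.
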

\begin{proof} Assume that the right-gyrotranslation inequality holds. Then we have
\begin{align*}
d(x\oplus y, a\oplus b) &\leq d(x\oplus y, x\oplus b) + d(x\oplus b, a\oplus b)\\
{} &= d(y, b) + d(x\oplus b, a\oplus b)\\
{} &\leq d(y, b) + d(x, a)\\
{} &= d(x, a)+d(y, b)
\end{align*}
for all $a, b, x, y\in G$. Conversely, if Klee's condition holds, then
\begin{equation*}
d(x\oplus a, y\oplus a) \leq d(x, y)+d(a, a) = d(x, y)
\end{equation*}
for all $a, x, y\in G$.
\end{proof}

Recall that a gyrogroup $G$ endowed with a topology is called a {\it topological} gyrogroup if (\hyperlink{i}{i}) the gyroaddition map $(x, y)\mapsto x\oplus y$ is jointly continuous and (\hyperlink{ii}{ii}) the inversion map $x\mapsto \ominus x$ is continuous \cite[Definition 1]{MR3646419}. In general, a normed gyrogroup need not be a topological gyrogroup. The conditions mentioned in \mbox{Theorem} \ref{thm: right gyro ineq iff Klee condition} are sufficient conditions for a normed gyrogroup to be a \mbox{topological} gyrogroup. It is still an open question whether these conditions are necessary. 

\begin{theorem}
Let $G$ be a normed gyrogroup. If one of the conditions in Theorem \ref{thm: right gyro ineq iff Klee condition} holds, then $G$ is a topological gyrogroup with respect to the topology induced by the gyronorm metric.
\end{theorem}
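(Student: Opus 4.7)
The plan is to use the equivalence in Theorem \ref{thm: right gyro ineq iff Klee condition} freely, so I can assume both the right-gyrotranslation inequality and Klee's condition hold. I then need to verify the two defining properties of a topological gyrogroup: joint continuity of gyroaddition and continuity of inversion.

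For joint continuity of the gyroaddition map $(x,y)\mapsto x\oplus y$, Klee's condition does the job almost verbatim. Given a point $(a,b)\in G\times G$ and $\epsilon>0$, I would choose $\delta=\epsilon/2$. Then for every $(x,y)$ with $d(x,a)<\delta$ and $d(y,b)<\delta$, Klee's condition gives
\[
d(x\oplus y,\,a\oplus b)\leq d(x,a)+d(y,b)<\epsilon,
\]
proving joint continuity at the arbitrary point $(a,b)$. This step is essentially immediate and requires no gyrogroup-specific manipulation.

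For continuity of the inversion map $x\mapsto\ominus x$, my plan is to show the stronger statement that inversion is in fact an isometry under the right-gyrotranslation inequality. Applying the right-gyrotranslation inequality with the pair $(\ominus x,\ominus a)$ and translating by $a$ yields
\[
d(\ominus x\oplus a,\,\ominus a\oplus a)\leq d(\ominus x,\,\ominus a),
\]
and since $\ominus a\oplus a=e$, the left-hand side is just $\norm{\ominus x\oplus a}=d(x,a)$. Hence $d(x,a)\leq d(\ominus x,\ominus a)$. Substituting $x\mapsto\ominus x$ and $a\mapsto\ominus a$ and using involution of inversion gives the reverse inequality, so $d(\ominus x,\ominus a)=d(x,a)$. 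Thus inversion is an isometry and a fortiori continuous.

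There is no real obstacle here; the only delicate point is the inversion argument, where one must remember to symmetrise the right-gyrotranslation inequality by swapping variables to obtain an equality rather than just a one-sided bound. Once both maps are shown continuous, $G$ satisfies Atiponrat's definition of a topological gyrogroup and the proof is complete.
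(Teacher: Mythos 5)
Your proposal is correct and follows essentially the same route as the paper: joint continuity of $\oplus$ falls out of Klee's condition with $\delta=\epsilon/2$, and continuity of $\ominus$ is obtained by showing inversion is an isometry via the right-gyrotranslation inequality plus the symmetrisation trick (the paper phrases the latter as $\iota=\iota^{-1}$, you phrase it as substituting $\ominus x,\ominus a$ for $x,a$). Your intermediate chain for the inversion step --- translating the pair $(\ominus x,\ominus a)$ on the right by $a$ to get $d(\ominus x\oplus a,e)=\norm{\ominus x\oplus a}=d(x,a)\leq d(\ominus x,\ominus a)$ --- is a clean, fully justified application of the inequality and is, if anything, slightly more transparent than the paper's corresponding computation.
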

\begin{proof}
Denote by $A$ the gyroaddition map: $A(x, y) = x\oplus y$. Let $(x, y)$ be an arbitrary point of $G\times G$ and let $V$ be a neighborhood of $A(x, y) = x\oplus y$. By definition, there is an $\epsilon>0$ such that $B(x\oplus y, \epsilon)\subseteq V$. Define $S = B(x, \epsilon/2)$ and $T = B(y, \epsilon/2)$. Set $U = S\times T$. Since $S$ and $T$ are open in $G$, it follows that $U$ is a neighborhood of $(x, y)$ in $G\times G$. Let $(a, b)\in U$. Then $a\in S$ and $b\in T$. By assumption,
$$
d(x\oplus y, a\oplus b) \leq d(x, a)+d(y, b) < \epsilon.
$$
Hence, $A(a, b) = a\oplus b\in B(x\oplus y, \epsilon)\subseteq V$ and so $A(U)\subseteq V$. This proves that $A$ is continuous.

Denote by $\iota$ the inversion map of $G$. By the right-gyrotranslation inequality, 
\begin{align*}
d(\iota(x), \iota(y)) &= d(\ominus x, \ominus y)\\
{} &\leq d(e, \ominus y\oplus x)\\
{} &= d(y\oplus e, y\oplus(\ominus y\oplus x))\\
{} &= d(x, y)
\end{align*}
for all $x, y\in G$. This also implies that $d(x, y)\leq d(\iota(x), \iota(y))$ for all $x, y\in G$ because $\iota = \iota^{-1}$. Thus, $d(\iota(x), \iota(y)) = d(x, y)$ for all $x, y\in G$ and so $\iota$ is an isometry of $G$. Hence, $\iota$ is continuous. This proves that $G$ is a topological gyrogroup.
\end{proof}

According to Theorem 2.18 of \cite{MR2743093}, the group-norm of a group $\Gamma$ is {\it abelian}, that is, $$\norm{gh} = \norm{hg}$$ for all $g, h\in\Gamma$ if and only if the metric induced by this group-norm is bi-invariant. This motivates the following theorem for normed gyrogroups:

\begin{theorem}
Let $G$ be a normed gyrogroup with the corresponding metric $d$. Then the following conditions are equivalent.
\begin{enumerate}[label=(\Roman*)]
\item\label{item: commutative-like} Commutative-like condition: $\norm{(a\oplus x)\oplus\gyr{a, x}{(y\ominus a)}} = \norm{x\oplus y}$ for all $a, x, y\in G$.
\item\label{item: bi-gyrotranslation invariant} Bi-gyrotranslation invariant: $d(x\oplus a, y\oplus a) = d(x, y) = d(a\oplus x, a\oplus y)$ for all $a, x, y\in G$.
\end{enumerate}
\end{theorem}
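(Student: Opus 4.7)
My plan is to show that condition~\ref{item: commutative-like} is precisely the right-gyrotranslation invariance of $d$; combined with Theorem~\ref{thm: left gyrotranslation invariant under length metric}, which already supplies left-gyrotranslation invariance, this yields the bi-invariance in~\ref{item: bi-gyrotranslation invariant}.

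The first step is to establish the auxiliary identity
\[
\norm{(a\oplus x)\oplus\gyr{a, x}{(y\ominus a)}} = \norm{\ominus(\ominus x\ominus a)\oplus(y\ominus a)},
\]
which holds in every normed gyrogroup regardless of~\ref{item: commutative-like} or~\ref{item: bi-gyrotranslation invariant}. To derive it, I would apply the gyroautomorphism $\gyr{x, a}{}$ to the expression inside the first norm. Since $\gyr{x, a}{}$ is an automorphism and $\gyr{x, a}{}\circ\gyr{a, x}{}$ is the identity by inversive symmetry, the result is $\gyr{x, a}{(a\oplus x)}\oplus(y\ominus a)$. The identity $\ominus(u\oplus v) = \gyr{u, v}{(\ominus v\ominus u)}$, with $u = \ominus x$ and $v = \ominus a$, combined with the even property $\gyr{\ominus x, \ominus a}{} = \gyr{x, a}{}$ and involution of inversion, gives $\gyr{x, a}{(a\oplus x)} = \ominus(\ominus x\ominus a)$. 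Taking gyronorms and invoking the gyration invariance of $\norm{\cdot}$ yields the claimed identity.

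Next, I would rewrite both sides as metrics via $\norm{\ominus u\oplus v} = d(u, v)$: the right-hand side becomes $d(\ominus x\ominus a,\, y\ominus a)$, while $\norm{x\oplus y} = d(\ominus x, y)$. Hence~\ref{item: commutative-like} is equivalent to $d(\ominus x\ominus a,\, y\ominus a) = d(\ominus x, y)$ for all $a, x, y\in G$. Because $x\mapsto\ominus x$ and $a\mapsto\ominus a$ are bijections of $G$, this reparametrises as $d(u\oplus c, v\oplus c) = d(u, v)$ for all $u, v, c\in G$, i.e., right-gyrotranslation invariance. Together with Theorem~\ref{thm: left gyrotranslation invariant under length metric}, this is exactly~\ref{item: bi-gyrotranslation invariant}, and the argument runs symmetrically in the reverse direction.

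The main obstacle is the auxiliary identity in the second paragraph: several gyrogroup facts---even property, inversive symmetry, the fact that gyroautomorphisms distribute over $\oplus$, and invariance of $\norm{\cdot}$ under gyrations---must be combined in the correct order, and it is easy to confuse $\gyr{x, a}{}$ with $\gyr{a, x}{}$. Once the identity is in hand, the remainder of the equivalence is only a matter of renaming variables.
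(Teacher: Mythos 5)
Your proposal is correct and follows essentially the same route as the paper: both rest on the unconditional identity $\norm{(a\oplus x)\oplus\gyr{a,x}{(y\ominus a)}} = d(\ominus x\ominus a,\, y\ominus a)$, obtained from $\ominus(u\oplus v)=\gyr{u,v}{(\ominus v\ominus u)}$, inversive symmetry, the even property, and gyration-invariance of the gyronorm, after which condition~(I) becomes right-gyrotranslation invariance under the substitution $a\mapsto\ominus a$, $x\mapsto\ominus x$, and left invariance is supplied by Theorem~\ref{thm: left gyrotranslation invariant under length metric}. The only difference is organizational: you isolate the identity first and then reparametrise, while the paper runs the same chain of equalities starting from $d(x\oplus a, y\oplus a)$ and reverses it for the converse.
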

\begin{proof}
 Let $a, x, y\in G$. Direct computation shows that
\begin{align}\label{eqn: in proof: right gyrotranslation invariant}
\begin{split}
d(x\oplus a, y\oplus a) &= \norm{\ominus (x\oplus a)\oplus(y\oplus a)}\\
{} &= \norm{\gyr{x, a}{(\ominus a\ominus x)}\oplus (y\oplus a)}\\
{} &= \norm{(\ominus a\ominus x)\oplus\gyr{a, x}{(y\oplus a)}}\\
{} &= \norm{(\ominus a\ominus x)\oplus\gyr{\ominus a, \ominus x}{(y\ominus (\ominus a))}}\\
{} &= \norm{\ominus x\oplus y}\\
{} &= d(x, y).
\end{split}
\end{align}
Hence, $d$ is invariant under right gyrotranslation. By Theorem \ref{thm: left gyrotranslation invariant under length metric}, $d$ is invariant under left gyrotranslation as well. Conversely, computation as in \eqref{eqn: in proof: right gyrotranslation invariant} with $\ominus a$ in place of $a$ and $\ominus x$ in place of $x$ gives
\begin{equation*}
\norm{x\oplus y} = d(\ominus x, y) = d(\ominus x\ominus a, y\ominus a) = \norm{(a\oplus x)\oplus\gyr{a, x}{(y\ominus a)}}.\qedhere
\end{equation*}
\end{proof}

\vspace{0.3cm}
\noindent{\bf Acknowledgements.} The author would like to thank Themistocles M. Rassias and Watchareepan Atiponrat for their collaboration.

\bibliographystyle{amsplain}\addcontentsline{toc}{section}{References}
\bibliography{References}
\end{document}